\newtheorem{theorem}{Theorem}[section]
\newtheorem{lemma}[theorem]{Lemma}
\newtheorem{definition}[theorem]{Definition}
\def\Niel{\mbox{\rm{Niel}}}
\def\Bump{\mbox{\rm{Bump}}}
\def\PSL{\mbox{\rm{PSL}}}
\def\Stab{\mbox{\rm{Stab}}}
\title{The bumping set and the characteristic submanifold}
\author{Genevieve S Walsh} 
\date{}
\begin{document}
\maketitle
\begin{abstract} We show here that the Nielsen core of the bumping set of the domain of discontinuity of a Kleinian group $\Gamma$  is the boundary of the characteristic submanifold of the associated 3-manifold with boundary. Some examples of interesting characteristic submanifolds are given. We also give a construction of the characteristic submanifold directly from the Nielsen core of the bumping set. The proofs are from ``first principles", using properties of uniform domains and the fact that quasi-conformal discs are uniform domains. 
 \end{abstract}

\section{Notation and background}

Let $G$ be a Kleinian group without torsion where $\Lambda(G)$ is the limit set and $\Omega(G)$ is the domain of discontinuity.  We denote the quotient 3-manifold with boundary, $({\mathbb H}^3 \cup \Omega(G))/G$, by $M(G)$.  We require throughout that $M(G)$ is geometrically-finite with incompressible, quasi-Fuchsian boundary components.  The group $G$ is finitely-generated and the components of the domain of discontinuity are all disks.  The conformal boundary of $M(G)$, $\Omega(G)/G$, is a finite union of finite area surfaces by Ahlfor's finiteness theorem.  Since each surface subgroup is quasi-Fuchsian, each component of $\Omega(G)$ is a quasi-disc, the image of the standard unit disc in the complex plane under a quasi-conformal homeomorphism.  We will further require that the closure of any one component is a closed disk. In this case we say, by abuse of notation,  that $G$ is a \emph{geometrically-finite Kleinian group with incompressible boundary}. We will be interested in where components of the domain of discontinuity meet.  Accordingly,  define $\Bump(C_1,C_2,...C_n)$ to be $\bar C_1 \cap \bar C_2 \cap...\cap \bar C_n$ where $C_1,...,C_n$  are components of the domain of discontinuity.   Let $C$ be a component of $\Omega(G)$.  The \emph{Nielsen core} of a subset $X$ of $\partial C$ is the convex hull of $X$ in the Poincare metric on $C$.  This is well defined as the Riemann map from the interior of the unit disc to $C$ extends to the boundary by \cite{boundary}.  Let $\Niel_{C_1}(\mathcal{C})$ denote the Nielsen core of the bumping set of components $\mathcal{C} = \lbrace C_1,... C_n \rbrace $ in $C_1$. The \emph{Nielsen core of the bumping set of $\mathcal{C} = C_1,...C_n$} is $\bigcup_i \Niel_{C_i}(\mathcal{C})$, which we denote simply by $\Niel(\mathcal{C})$.   We denote the stabilizer of a component $C$ of $\Omega(G)$ by $\Stab(C)$.  If the Nielsen core of any bumping set is non-trivial, this is an obstruction to $M(G)$ admitting a hyperbolic metric with totally geodesic boundary, and by work of Thurston (see \cite[Theorem 6.2.1]{Mardenbook}) this is the only obstruction.  Maskit showed: 

\begin{lemma} \cite[Theorem 3]{component} $\Lambda(G_C \cap G_B) = \Lambda(G_C) \cap \Lambda(G_B) = \bar  C \cap \bar B$.
\end{lemma}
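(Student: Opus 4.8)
The plan is to read the displayed chain as three separate assertions and to dispatch them in increasing order of difficulty: first identify each limit set with a quasicircle boundary, then settle the middle equality by elementary point-set topology, and finally prove the two inclusions of the genuinely group-theoretic equality $\Lambda(G_C\cap G_B)=\Lambda(G_C)\cap\Lambda(G_B)$. Throughout I write $G_C=\Stab(C)$ and $G_B=\Stab(B)$. Under the standing hypotheses these are quasi-Fuchsian, so $\Lambda(G_C)$ is a quasicircle whose two complementary quasi-discs are the components of $\Omega(G_C)$; since $C$ is one of them, its topological frontier in $S^2$ is exactly $\Lambda(G_C)$, giving $\Lambda(G_C)=\partial C$ and likewise $\Lambda(G_B)=\partial B$. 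Because $C$ and $B$ are distinct components of the open set $\Omega(G)$ they are disjoint, and each frontier lies in $\Lambda(G)\subseteq S^2\setminus\Omega(G)$; hence $C\cap\bar B=\emptyset$ and $\partial C\cap B=\emptyset$, so $\bar C\cap\bar B=\partial C\cap\partial B=\Lambda(G_C)\cap\Lambda(G_B)$. This disposes of the second equality at once.

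For the first equality, the inclusion $\Lambda(G_C\cap G_B)\subseteq\Lambda(G_C)\cap\Lambda(G_B)$ is immediate from monotonicity of the limit set under subgroup inclusion, applied to $G_C\cap G_B\le G_C$ and $G_C\cap G_B\le G_B$. All of the content is therefore in the reverse inclusion $\partial C\cap\partial B\subseteq\Lambda(H)$, where $H=G_C\cap G_B$: I must show that \emph{whenever the closures of two components meet, the meeting point is already seen by the intersection group}. This fails for arbitrary subgroups, so geometric finiteness must enter. I would fix $p\in\partial C\cap\partial B$ and, using that every limit point of the geometrically finite group $G_C$ is either conical or a bounded parabolic point, split into these two cases.

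If $p$ is a bounded parabolic point, then it is the fixed point of a maximal parabolic subgroup in each of $G_C$ and $G_B$; under the incompressible, geometrically finite hypothesis the cusp of the surface $C/G_C$ and the cusp of $B/G_B$ abutting $p$ are carried by a common parabolic, so this parabolic lies in $H$ and fixes $p$, whence $p\in\Lambda(H)$. If instead $p$ is a conical (radial) limit point, I would take a geodesic ray to $p$ and use cocompactness of the convex core of $G_C$ modulo cusps to produce a sequence in $G_C$ that records $p$ conically, doing the same inside $G_B$, and then match the two sequences: discreteness of $G$ together with the bounded geometry of the thick part forces cofinally many of these group elements to coincide, producing elements of $H$ whose axes accumulate at $p$. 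I expect the conical case to be the main obstacle, since upgrading ``$p$ is conical for both $G_C$ and $G_B$'' to ``$p$ is a limit point of $G_C\cap G_B$'' is exactly the Susskind-type phenomenon, and it is here that the uniform-domain and quasi-disc recurrence estimates emphasized in this paper would do the real work of guaranteeing the required coincidences.
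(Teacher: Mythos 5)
First, a point of orientation: the paper does not prove this statement at all --- it is quoted verbatim from Maskit \cite[Theorem 3]{component} (later generalized by Anderson), and the author explicitly notes elsewhere that Maskit's arguments rest on the combination theorems, machinery this paper deliberately avoids. So there is no internal proof to match your attempt against; the question is only whether your proposal stands on its own. Your treatment of the second equality is fine once one grants $\Lambda(G_C)=\partial C$ (itself a nontrivial fact of Maskit's component-subgroup theory --- you need $C$ to be a full component of $\Omega(G_C)$, not merely contained in one --- though it is close to being built into this paper's hypotheses), and the inclusion $\Lambda(G_C\cap G_B)\subseteq\Lambda(G_C)\cap\Lambda(G_B)$ is indeed immediate.

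The genuine gap is in the reverse inclusion, which is the entire content of the lemma, and your proposal does not close it. The conical case is presented as a plan (``I would take\ldots'', ``I expect\ldots'') rather than an argument: the step where ``discreteness together with bounded geometry forces cofinally many of these group elements to coincide'' is precisely the assertion to be proved, and nothing in your sketch uses the hypothesis that $G_C$ and $G_B$ are \emph{component stabilizers} rather than arbitrary geometrically finite subgroups --- yet it is exactly that hypothesis which makes the statement true, since for general pairs of geometrically finite subgroups the Susskind--Swarup/Anderson results leave an exceptional set of parabolic points and your matching argument, if it worked as stated, would prove too much. The parabolic case also has a hole: the maximal parabolic subgroups of $G_C$ and of $G_B$ fixing $p$ are each cyclic, and if $\Stab_G(p)\cong\mathbb{Z}^2$ two such cyclic subgroups can intersect trivially, so ``carried by a common parabolic'' needs an argument. (That particular point can be repaired using the mechanism this paper does develop --- step (3) of the proof of Theorem \ref{thm:boundary} and Lemma \ref{lem:stab} show that a parabolic or hyperbolic element of $\Stab(C)$ whose fixed points lie in $\bar B$ must stabilize $B$ --- but that tool only reaches fixed points of elements, not general conical limit points, so it cannot substitute for Maskit's proof of the full statement.) Finally, your closing guess that the uniform-domain estimates are the intended engine for this lemma is a misreading of the paper's architecture: those estimates are deployed for Theorem \ref{thm:boundary} and Lemma \ref{lem:stab}, while this lemma is imported as a black box.
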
 

This was later generalized by Anderson \cite{Anderson}. 

\section{The image of the Nielsen core of the bumping set} 
Here we show that the image of the Nielsen core of the bumping set of two or more components is a union of  simple closed essential curves  and essential subsurfaces of the boundary components.  This image, with any simple closed curves thickened, will form the boundary of the characteristic submanifold of the quotient $M(G)$. Recall that the stabilizer $G_C \subset G$ of a component $C$ of $\Omega(G)$ has a representation $\phi$ into $\PSL(2, \mathbb{R})$ induced by the uniformization map from the unit disc.   An \emph{accidental parabolic} is an element $g \in G_C$ that is parabolic in $G$ but where $\phi(g)$ is hyperbolic.   Note that our definition of Kleinian group with incompressible boundary, requiring that the closure of any component is a disc, rules out accidental parabolics.  


\begin{theorem} \label{thm:boundary} Let $G$ be a geometrically-finite Kleinian group with incompressible boundary and let $p: {\bf H}^3 \cup \Omega(G) \rightarrow M(G)$ be the covering map induced by the action of $G$. Suppose that $C_1, ..., C_n$  are components of $\Omega(G)$ with non-trivial Nielsen core of the bumping set $\Bump(C_1,..., C_n)$.  Then the image of $\Niel_{C_1}(C_1,....,C_n)$ in $\partial M(G)$ is either a simple geodesic or a subsurface of $p(C_1)$ bounded by geodesics.  \end{theorem}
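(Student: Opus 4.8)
The plan is to pass to the Fuchsian uniformization of $C_1$, identify $\Niel_{C_1}(C_1,\dots,C_n)$ with the convex hull of a limit set, and then show this convex hull is precisely invariant so that it embeds in $p(C_1)$. First I would set $H=\Stab(C_1)\cap\dots\cap\Stab(C_n)$. By the Lemma above together with its generalization by Anderson \cite{Anderson}, the bumping set satisfies $\Bump(C_1,\dots,C_n)=\bar C_1\cap\dots\cap\bar C_n=\Lambda(H)$. Via the uniformization $\phi:\Stab(C_1)\to\PSL(2,\mathbb{R})$ I identify $C_1$ with $\mathbb{H}^2$ and $\Stab(C_1)$ with a torsion-free Fuchsian group $\Gamma_1$; the boundary extension of the Riemann map \cite{boundary} carries $\Lambda(H)\subseteq\partial C_1$ to a closed $\phi(H)$-invariant subset of $S^1=\partial\mathbb{H}^2$, whose convex hull in the Poincar\'e metric is exactly $N:=\Niel_{C_1}(C_1,\dots,C_n)$. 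Since $G$ is torsion-free with no accidental parabolics, $\phi(H)$ contains no parabolics, so there are two cases: if $\phi(H)$ is elementary it is infinite cyclic generated by a hyperbolic $h$, $\Lambda(H)$ is the two fixed points of $h$, and $N$ is the axis of $h$; if $\phi(H)$ is non-elementary then $\Lambda(H)$ is perfect and $N$ is a two-dimensional convex region whose relative boundary in $\mathbb{H}^2$ is a union of complete geodesics, the leaves spanning the complementary intervals of $\Lambda(H)$.

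The heart of the argument is to prove that $N$ is \emph{precisely invariant} under $K:=\Stab_{\Gamma_1}(N)$ in $\Gamma_1$, i.e.\ that $g$ carries the interior of $N$ off $N$ whenever $g\in\Gamma_1\setminus K$. Granting this, the restriction of $p$ to $N$ factors through an embedding $N/K\hookrightarrow p(C_1)$, and the two cases above become the two conclusions of the theorem: when $K$ is cyclic, $N/K$ is the closed geodesic determined by the primitive generator of $K$, and precise invariance of its axis is exactly the statement that this geodesic is \emph{simple}; when $K$ is non-elementary, $N/K$ is the convex core of the intermediate cover $\mathbb{H}^2/K$, an essential subsurface of $p(C_1)$ whose frontier is the image of the boundary leaves of $N$, hence a union of simple closed geodesics. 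Shared ideal endpoints of $N$ and a translate correspond to cusps of $p(C_1)$, which, since there are no accidental parabolics, are genuine, so the embedding is not disturbed.

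To establish precise invariance I would reduce it to an \emph{unlinking} statement on $\partial C_1$ and settle that by a Jordan-curve argument in $S^2$. For $g\in\Gamma_1$ the translate $gN$ is the convex hull of $gB=\Bump(C_1,gC_2,\dots,gC_n)=\bar C_1\cap\overline{gC_2}\cap\dots\cap\overline{gC_n}$, and the interiors of two convex hulls in $\mathbb{H}^2$ meet exactly when their defining boundary sets are linked on $S^1$. So suppose $B$ and $gB$ are linked, with $a,c\in B$ and $b,d\in gB$ interleaved on $\partial C_1$; if $g\notin H$ then $gC_j\neq C_j$ for some $j\ge 2$, and since $a,c\in\bar C_j$ and $b,d\in\overline{gC_j}$ with all four points on $\partial C_1$, I may join $a$ to $c$ by an arc through the disc $C_j$ and $b$ to $d$ by an arc through the disc $gC_j$, using that the closure of each component is a closed disc. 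Both arcs are proper arcs in the complementary disc $S^2\setminus C_1$ with linked endpoints on its boundary, so they must cross; but their interiors lie in the disjoint components $C_j$ and $gC_j$, a contradiction. Hence linking forces $gC_j=C_j$ for all $j$, that is $g\in H\subseteq K$, which is the desired precise invariance.

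The step I expect to be the main obstacle is the bridge between the intrinsic and extrinsic pictures inside this unlinking argument: making rigorous that the meeting of the Poincar\'e convex hulls $N$ and $gN$ \emph{inside} $C_1$ is detected by interleaving of $B$ and $gB$ on the quasicircle $\partial C_1\subseteq S^2$. This is precisely where $C_1$ being a uniform domain (a quasi-disc) is used: it guarantees that the intrinsic geodesics spanning $B$ are quasigeodesics in $S^2$, so that linking of convex hulls in the Poincar\'e metric coincides with interleaving of the corresponding boundary point sets, uniformly over all $g$, which is what lets the planar Jordan contradiction control the intrinsic core. A secondary point, needing geometric finiteness and incompressibility, is finiteness and essentiality in the non-elementary case: these ensure $\Lambda(H)$ has only finitely many $K$-orbits of complementary intervals, so the frontier of $N/K$ is a finite union of essential simple closed geodesics and $N/K$ is a genuine essential subsurface of $p(C_1)$.
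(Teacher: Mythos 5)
Your argument is essentially sound, but it takes a genuinely different route from the paper, and the difference is exactly the point of the paper. You uniformize $C_1$, invoke Maskit--Anderson to identify $\Bump(C_1,\dots,C_n)$ with $\Lambda(H)$ for $H=\bigcap_i\Stab(C_i)$, and then read off the conclusion from the structure theory of geometrically finite Fuchsian groups (convex cores have finitely many closed geodesic boundary components; bounded parabolic fixed points are not endpoints of boundary leaves), with precise invariance supplied by your linking/Jordan-curve argument. That linking argument coincides with the paper's step (1). But everything else in the paper --- the non-accumulation of the translates of a boundary leaf and the fact that a boundary leaf cannot end at a parabolic fixed point --- is proved there ``from first principles'' using only the uniform-domain property of quasi-discs, precisely to avoid the combination-theorem machinery behind Maskit's and Anderson's results. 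Your proof instead outsources those two steps to Anderson: note that you need not just $\Bump=\Lambda(H)$ but also that $H$ is geometrically finite (equivalently $\phi(K)$ finitely generated), since that is what gives finitely many $K$-orbits of complementary intervals and hyperbolic stabilizers of the boundary leaves; this does not follow from geometric finiteness of $G$ alone and is the substantive content of the citation, so you should say so explicitly rather than attribute it to ``geometric finiteness and incompressibility'' of $G$. What your route buys is brevity and a clean structural picture ($N/K$ is literally a convex core); what the paper's route buys is independence from the deep input.

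One correction to your self-diagnosis: the step you flag as the main obstacle --- that intersection of Poincar\'e convex hulls in $C_1$ is detected by interleaving of the ideal endpoint sets on $\partial C_1$ --- does not require the uniform-domain or quasigeodesic property at all. Since $\partial C_1$ is a Jordan curve, the Riemann map extends to a homeomorphism of closed discs (the Osgood--Taylor reference already cited in the paper), which transports the cyclic order on $\partial C_1$ to $S^1$; after that the equivalence is elementary hyperbolic geometry in the disc. The uniform-domain property is needed elsewhere: in the paper it is the engine that rules out accumulation of the translates $\gamma_i(B)$ and forces elements fixing points of the bumping set to stabilize the other components (Lemma 3.2), which are the steps your approach replaces with Fuchsian structure theory. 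Also be slightly careful with ``interiors meet exactly when linked'': you should take relative interiors when $N$ is a single geodesic, and verify that a genuine crossing always yields four \emph{distinct} interleaved points before running the two-arc Jordan argument.
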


We note that a similar theorem is proven in Maskit \cite{component} although this uses the deep work of the combination theorems which we do not use.  There is also a similar statement in \cite{Cyril}.  Bill Thurston understood the characteristic submanifold from a similar point of view in his discussion of the window in \cite{BillWindow}.  However, it has come to our attention that proving this statement directly from the properties of quasi-discs may be useful.

\begin{proof} 

We will consider the image of a boundary curve $\beta$ of $\Niel_{C_1}(C_1,..., C_n)$.  The curve $\beta$ is necessarily a geodesic since it is the boundary curve of a convex hull. If the convex hull consists of a geodesic going between the two points of the bumping set, we consider this to be a boundary curve.  The strategy of the proof is the following. We show that the image of $\beta$ (1) is simple, (2) does not accumulate, and (3) does not exit a cusp.  Therefore the image of a boundary curve $\beta$ is an essential simple closed curve on the surface $p(C_1)$.   Since the map is a covering map, the image will be bounded by essential simple closed curves, and hence will be either a simple closed curve or a subsurface of $p(C_1)$.   

(1) The image of a boundary curve $\beta$ is simple. The pre-image of the curve $p(\beta)$ in $C_1$ is the orbit of $\beta$ under the action of $G_{C_1}$.  If the image were not simple, then its pre-images in $C_1$ would intersect. Thus assume that there is a $\gamma$ in the stabilizer of $C_1$ such that $\gamma(\beta)$ intersects  $\beta$ transversely.  Since $\beta$ is the boundary curve of a convex set, this implies $\gamma \notin \Stab(C_i)$ for some $i \in \lbrace 2, 3, ..., n \rbrace$.  Two circles on the two-sphere must intersect an even number of times. Consider the convex hulls of the endpoints of $\beta$ in $\bar C_1$ and $\bar C_i$ and the endpoints of $\gamma(\beta)$ in $\bar C_1$ and $\gamma(\bar C_i)$.    These are two circles in $\bar C_1 \cup \bar C_i \cup \gamma(\bar C_i)$ which intersect once in $C_1$ and which do not intersect on the boundaries of the components.  Thus $C_i$ and $\gamma(C_i)$  intersect in their interior, which contradicts the fact that they are distinct components.  

(2) The image of a boundary curve does not accumulate.  Here we use that a quasi-disc is a uniform domain. 
\begin{definition} \label{d:uniform} A domain $A$ is {\em uniform} if there are constants $a$ and $b$ such that every pair of points $z_1, z_2 \in C$ can be joined by an arc $\alpha$ in $A$ with the following properties: 
\begin{enumerate} 
\item The Euclidean length of $\alpha$ satisfies $$l(\alpha) \leq a|z_1-z_2|$$

\item  For every $z \in \alpha$, 
$$min(l(\alpha_1), l(\alpha_2)) \leq bd(z, \partial A),$$

where $\alpha_1$ and $\alpha_2$ are the components of $\alpha \setminus z$. 
\end{enumerate}
\end{definition} 

By \cite[Part I, Thm 6.2]{Lehto}, a $K$-quasi-disc is a uniform domain with constants $a$ and $b$ which depend only on $K$.  Now suppose that the image of a boundary curve $\beta$ of $\Niel_{C_1}(C_1,..,C_n)$  accumulates in $p(C_1)$.  Then the images of $\beta$  in $C_1$ under the action of $\Stab(C_1)$ accumulate. Since $\beta$ is geodesic, there is a sequence $\lbrace \gamma_i \rbrace$ in $\Stab(C_1)$ such that the endpoints of $\gamma_i(\beta)$ accumulate in $\partial(C_1)$ in the Poincar\'e metric to two points $p$ and $q$.   Since $\beta$ is a boundary curve, there is some component $B \in \lbrace C_2, ... , C_n \rbrace$ such that for an infinite number of $\gamma_i$, $\gamma_i(B)$ are all distinct from each other and from $B$.  We continue to call this subsequence $\lbrace \gamma_i \rbrace$. Since the $\gamma_i$ all act conformally on $S^2_\infty$, each $\partial \gamma_i(B)$ is a $K_B$-quasicircle where $K_B$ is the quasi-conformal constant of $B$.  The point is that the $\gamma_i(B)$ will eventually be too skinny to satisfy a fixed $b$ in condition 2. above. 

There are points $p_i$ and $q_i$ of the $\gamma_i(B)$ which are accumulating to $p$ and $q$. Consider circles centered at $p$ which separate $p$ and $q$.  Then there is some such circle $C(p,r)$ that separates infinitely many $p_i$ from the associated $q_i$, and which separates $p$ from $q$.  Now consider arcs $\alpha_i$ in $\gamma_i(B)$ which connect $p_i$ and $q_i$.  Let $z_i$ be a point on $\alpha_i \cap C(p,r)$.  Then the distances $d(z_i, \partial\gamma_i(B))$ are going to zero since the $\partial \gamma_i(B) \cap C(p,r)$ are accumulating in $C(p,r)$. However, the distances from $z_i$ to $p_i$ and from $z_i$ to $q_i$ are bounded strictly above zero.  This is because we may assume that the the $p_i$ are contained in a closed disk, which has positive distance from $C(p,r)$.  We may assume the same thing for the $q_i$.  Since the lengths of the arcs of $\alpha_i \setminus z_i$ are bounded below by the distances $d(z_i, p_i)$ and $d(z_i, q_i)$, this contradicts property 2. above of a uniform domain in Definition \ref{d:uniform}.  Since quasi-discs are uniform domains \cite[Part I, Thm 6.2]{Lehto}, the image of a boundary curve cannot accumulate.

(3) Next we claim that the image of a boundary curve $c$ of $\Niel_{C_1}(C_1,..., C_n)$ does not exit a cusp.

Suppose that the image of a boundary curve $\beta$ of $\Niel_{C_1}(C_1,...,C_n)$ does exit a cusp.  Then in $C_1$, one of the endpoints of $\beta$ in $\partial C_1$ is the fixed point of a parabolic element $\gamma_p \in \Stab(C_1)$. Call this point $p$.  Let $B$ be another component of $\Omega(\Gamma)$ whose boundary contains $p$.  

We first claim that $\gamma_p$ also stabilizes $B$.  Indeed, suppose not and conjugate so that $\gamma_p$  fixes the point at infinity and translates by the action $z \rightarrow z+1$. If $\gamma_p$ does not stabilize $B$, the interiors of $\gamma_p^n(B)$ are distinct for $n \in \mathbb{Z}$.  The quasicircles $\partial\gamma_i(B)$ all go through infinity.  Therefore, since the transformations $\gamma_p$ and $\gamma_p^{-1}$ are translations that take $B$ off of itself, any point of $B$ is at most distance 1 from $\partial B$. Since $\partial B$ goes through $\infty$, there are points $z_1$ and $z_2$ of $B$ such that $|z_1 -z_2| > 2b$, for any constant $b$.  Then any arc $\alpha$ connecting $z_1$ and $z_2$ contains a point $z$ (the midpoint) such that 
$min(l(\alpha_1), l(\alpha_2)) \geq 1/2|z_1-z_2| > b \geq bd(z, \partial B)$,
where $\alpha_1$ and $\alpha_2$ are the components of $\alpha \setminus z$. This is a contradiction, since $B$ is a quasidisc and hence a uniform domain. 

Thus $\gamma_p$ stabilizes $B$, where $B$ is any component of $\Omega(G)$ such that $p \in \partial B$.   But this contradicts the assumption that $\beta$ is a boundary curve. Indeed, let $q$ be the other endpoint of $\beta$. Then $\gamma_p^n(q)$ and $\gamma_p^{-n}(q)$ will approach $p$ from both sides. Since $\gamma_p$ stabilizes $\Bump(C_1,...C_n)$,  $\beta$ cannot be a boundary curve of the convex hull of this set.   Therefore, the image of a boundary curve $\beta$ of $\Niel_{C_1}(C_1,...,C_n)$ does not exit a cusp.  

Since the image of a boundary curve is simple, does not accumulate, and does not exit a cusp, it is a simple closed curve. It is a geodesic in the Poincar\'e metric on $C_1$ as it is the boundary of a convex hull of points on the boundary $\partial C_1$.  Since $p: {\bf H}^3 \cup \Lambda(G) \rightarrow M(G)$ is a covering map, the image of boundary curves are boundary curves of the image.  This proves Theorem \ref{thm:boundary}.  

\end{proof}
\section{The characteristic submanifold} 

The characteristic submanifold of $(M(G), \partial M(G))$ is a 3-submanifold $(X_M,S_M)$ of $(M(G), \partial M(G))$ such that all of the essential tori and annuli in $M(G)$ can be property isotoped into $(X_M, S_M)$. It was defined and studied extensively by Jaco and Shalen \cite{JS} and Johannson \cite{Joh}. See also \cite[1.8]{Misha}.  It is defined by the following properties. 

(1) Each component $(X,S)$ of $(X_M, S_M)$ is an $I$-bundle over a surface or a solid torus equipped with a Seifert fibered structure. 

(2) The components of $\partial X \setminus \partial M(G)$ are essential annuli.

(3) Any essential annulus (or M\"obius band) is properly homotopic into $(X_M,S_M)$. 

(4)  $(X,S)$ is unique up to isotopy. 

A component of the characteristic submanifold which is a Seifert fibered solid torus can be described from the point of view of the domain of discontinuity as in the following example.  Suppose that there are three components $A$ $B$ and $C$ of the domain of discontinuity $\Omega(G)$ such that $\bar A \cap \bar B \cap \bar C = \lbrace p,q \rbrace $ and $\phi$ is a pseudo-Anosov element of $G$ which fixes $p$ and $q$ such that  $\phi(A) =B$, $\phi(B) =C$ and $\phi(C) =A$.  Thus $\phi^3 \in \Stab(A)$.  Now consider the solid torus $T$ which is the quotient of a regular neighborhood of the geodesic in $ {\bf H}^3$ with endpoints $p$ and $q$.  There is also an annulus in $p(A)$ which is the quotient of a regular neighborhood of the image of the geodesic in $A$ with endpoints $p$ and $q$.  This is stabilized by $\phi^3$. Suppose further that $A$ $B$ and $C$ are the only components of $\Omega(G)$ whose closures meet $p$ and $q$.  This annulus in $p(A)$ and an annulus on $\partial T$ co-bound an annulus $\times I$.  The annulus on $\partial T$  wraps three times around in the direction invariant by $\phi$. Then the component $X$  which is $T$ union the annulus $\times I$ is  a component of the characteristic submanifold.  This is a solid torus fibered by circles which wind three times around the core.  The boundary of $X$ is an annulus on $\partial M(G)$ union an annulus in the interior of $M(G)$.  Thus $\partial X \setminus \partial M(G)$ is an annulus.  This is one of the cases described in the proof of Theorem \ref{characteristic} below.  For now we give another example, where the union of the convex hulls of the bumping sets  is all of $\Omega(G)$.    Figure \ref{ptorus} shows the limit set of a quasi-Fuchsian free group of rank 2 acting on $\mathbb{C} \cup \infty = S^2_\infty$.  This picture was made with Curt McMullen's lim program \cite{lim}. The stabilizer of either component is the whole group.  Figure \ref{adjoin} shows what happens when we adjoin the square root of one of the generators.  If we denote the square root by $\gamma$, then $\gamma$ switches two components of $\Omega(G)$ which meet at the endpoints of the geodesic invariant by $\gamma$.  These are the center and outer components in Figure \ref{adjoin}.  $\gamma^2$ is in the stabilizer of both.  As above, there is a component of the characteristic submanifold which is a Seifert fibered torus and whose boundary is a union of two annuli.  In this case, we can also think of this component as a twisted $I$-bundle.

\begin{figure}
\begin{minipage}[b]{0.5\linewidth}
\centering
\includegraphics[width=\textwidth]{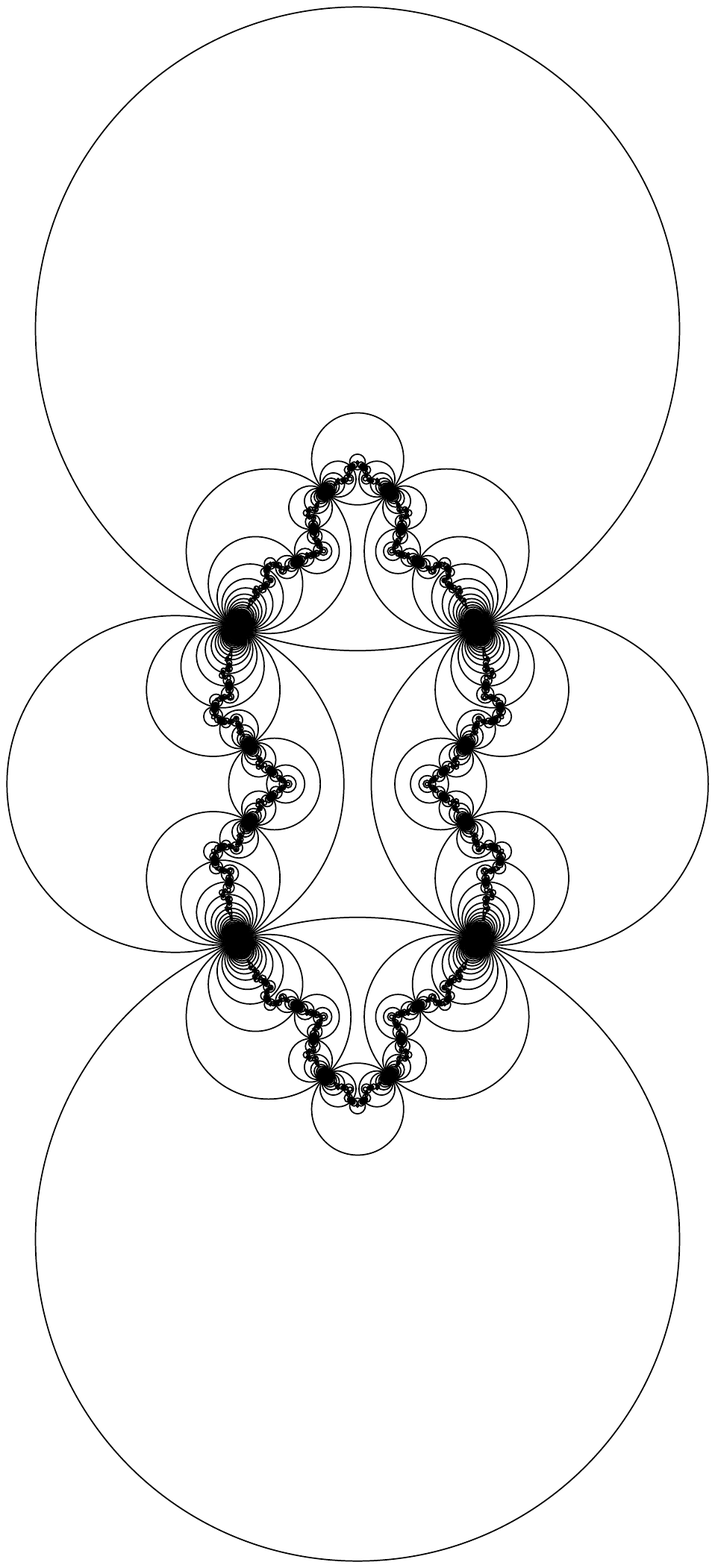}
\caption{A quasi-Fuchsian punctured torus.}
\label{ptorus}
\end{minipage} \hfill
\hspace{.5cm}
\begin{minipage}[b]{0.45\linewidth}
\centering
\includegraphics[width=\textwidth]{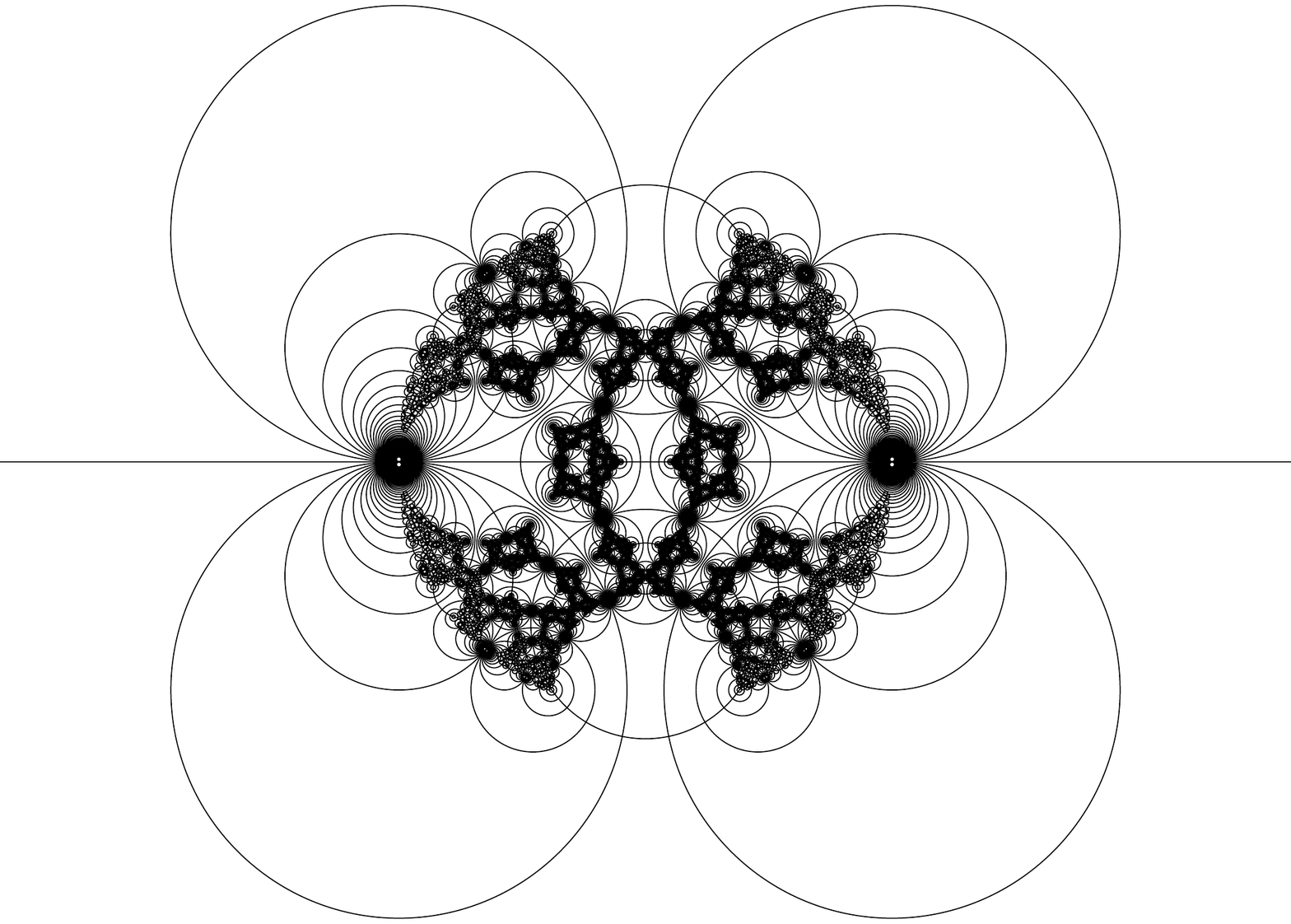}
\caption{After adjoining a square root.}
\label{adjoin}
\end{minipage}
\end{figure}

We will show here that the Nielson core of the bumping set, taken over all components of $\Omega(G)$, is the boundary of the characteristic submanifold of $M(G)$.  More precisely,

\begin{theorem} \label{characteristic} Let $G$ be a geometrically-finite Kleinian group with incompressible boundary and let $p: {\bf H}^3 \cup \Omega(G) \rightarrow M(G)$ be the covering map induced by the action of $G$. Then consider

$$ S'= \bigsqcup_C p(Niel_C(\mathcal{C})) $$ 

where $C$ ranges over the components of $\Omega(G)$ and $\mathcal{C }$ is a collection of components of $\Omega(G)$ containing $C$ which has non-trivial bumping set.  We require that each collection be maximal in the sense that adding any other components would strictly decrease the bumping set.  Let $S$ be $S'$ with any simple closed curves replaced by regular annular neighborhoods of these curves.  Then $S$, considered as a disjoint union of components as above, is the boundary of the characteristic submanifold of $M(G)$.  

 \end{theorem}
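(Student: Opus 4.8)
The plan is to turn each maximal collection $\mathcal{C}$ into an explicit submanifold piece whose trace on $\partial M(G)$ is the corresponding component of $S$, to assemble these into a single submanifold $X$ with $X \cap \partial M(G) = S$, to verify that $X$ satisfies properties (1)--(3) of the characteristic submanifold, and finally to invoke the uniqueness in property (4) to identify $X$ with $(X_M,S_M)$, giving $S = S_M$. The first step is bookkeeping: by Theorem \ref{thm:boundary} each $p(\Niel_C(\mathcal{C}))$ is a simple geodesic or an essential subsurface of $p(C)$ bounded by geodesics, so after thickening simple closed curves to annuli every component of $S$ is an essential subsurface of some boundary component. By Maskit's lemma (and its $n$-fold generalization in \cite{Anderson}) the bumping set is $\Lambda(H)$ with $H=\bigcap_i \Stab(C_i)$, so $\Niel_{C_i}(\mathcal{C})$ is the image in $C_i$ of the convex hull $\mathrm{CH}(\Lambda(H))\subset {\bf H}^3$, which is $H$-invariant.

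Next I would build the piece $X_{\mathcal C}$, distinguishing two cases by the size of $\Lambda(H)$. If $\Lambda(H)$ is two points, then $H$ is loxodromic cyclic, $\mathrm{CH}(\Lambda(H))$ is a geodesic $g$, and $X_{\mathcal C}$ is the quotient of a regular neighborhood of $g$ by $\Stab(g)$; this is a Seifert-fibered solid torus whose fiber winds when $\Stab(g)$ permutes the $C_i$ (exactly the worked example preceding the theorem, with the square-root example giving a twisted $I$-bundle). If $\Lambda(H)$ is larger, then $H$ is a Fuchsian surface group, $\mathrm{CH}(\Lambda(H))$ is a convex region, and $X_{\mathcal C}=\mathrm{CH}(\Lambda(H))/H$ is an $I$-bundle over a surface whose $\partial_I$-boundary consists of the images of the Nielsen cores in the various $p(C_i)$. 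In either case the frontier $\partial X_{\mathcal C}\setminus\partial M(G)$ is swept out by the fibers lying over the boundary geodesics of the Nielsen cores and is therefore a union of annuli; these are essential precisely because those boundary geodesics are essential, simple, and non-peripheral by parts (1)--(3) of Theorem \ref{thm:boundary}. This yields properties (1) and (2). The maximality of each collection $\mathcal{C}$ is what makes $S$ embedded and the pieces disjoint: two bumping loci sharing a frontier annulus would lie in a common larger collection, contradicting maximality.

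The crux is property (3): every essential annulus $A\subset M(G)$ must be properly homotopic into $X$. I would lift $A$; its core represents a primitive loxodromic $g\in G$, and its two boundary curves lie on components $C,B$ of $\Omega(G)$ which, after adjusting the lift, satisfy $g\in \Stab(C)\cap \Stab(B)$. Then the two fixed points of $g$ lie in $\bar C\cap\bar B=\Lambda(\Stab(C)\cap\Stab(B))$ by Maskit's lemma, so $C$ and $B$ bump, the axis of $g$ projects into the Nielsen core of the maximal collection containing $C$ and $B$, and hence $A$ homotopes into the associated piece $X_{\mathcal C}$. Essential tori are either boundary-parallel in the geometrically finite, quasi-Fuchsian-boundary setting or fall under the same cyclic analysis, so $X$ carries all essential annuli and tori. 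Having checked (1)--(3) with each piece essential, the Jaco--Shalen--Johannson uniqueness (property (4)) forces $X$ to be isotopic to the characteristic submanifold, and therefore $S = X\cap\partial M(G)=S_M$.

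I expect the third step to be the main obstacle. The subtle points are, first, arranging the lift of an essential annulus so that a \emph{single} group element simultaneously stabilizes both bounding components --- this is exactly the hypothesis that lets Maskit's lemma convert a purely topological annulus into a genuine bumping of components --- and, second, matching the given annulus to the \emph{maximal} collection rather than to an intermediate one, so that the annulus lands in the correct, embedded piece of $X$ and no essential annulus is missed.
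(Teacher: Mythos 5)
Your overall architecture (build a piece for each maximal bumping collection, check properties (1)--(3), invoke JSJ uniqueness) matches the paper's, and your treatment of property (3) --- lift the annulus, identify the loxodromic $g$ stabilizing both components, locate its fixed points in a bumping set --- is essentially the paper's argument, which finishes by comparing your annulus to a $g$-invariant strip inside the solid torus $(\mathbb{H}^3 \cup S^2_\infty \setminus \lbrace p,q\rbrace)/\langle g \rangle$. The subtle point you flag (a single element stabilizing both components) is exactly Lemma \ref{lem:stab}.

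The genuine gap is in your construction of the pieces: you define $X_{\mathcal C}$ as $\mathrm{CH}(\Lambda(H))/H$ with $H = \bigcap_i \Stab(C_i)$ (or as a quotient of a neighborhood of the axis in the two-point case), but this object lives naturally in the cover $(\mathbb{H}^3 \cup \Omega(H))/H$, and nothing in your argument shows that it maps to an \emph{embedded} submanifold of $M(G)$, nor that distinct pieces can be made disjoint. A priori these quotients are only immersed, and an immersed $I$-bundle does not satisfy property (2) or feed into the uniqueness statement. This is precisely where the paper spends most of its effort: it invokes the Cannon--Feustel annulus theorem (Theorem \ref{thm:annulus}) to replace each immersed frontier annulus by an embedded one with the same boundary, then runs innermost-disk and unlinking arguments (using the cyclic ordering of lifts around $p$ and $q$) to make the resulting collection of embedded annuli mutually disjoint, and only then takes the region $R$ bounded by their lifts as the piece. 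Your appeal to maximality to get disjointness also falls short of what actually happens: even within a single collection one can have $p(\Niel_C(C,D)) = p(\Niel_D(C,D))$, or a boundary geodesic of $\Niel_C$ projecting to the same curve as one of $\Niel_D$ (the M\"obius/twisted $I$-bundle cases), and the paper must handle these by an explicit shrinking of the Nielsen cores and by allowing the components of $S$ to be pushed slightly off one another. Without an embeddedness mechanism of this kind your proof does not close.
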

 
 To this end, we will need the following lemma: 
 
 \begin{lemma} \label{lem:stab} Let $G$ be a geometrically finite Kleinian group with incompressible boundary.  Let $C$ be a component of $\Omega(G)$. Assume $\phi \in \Stab(C)$ and that the fixed points of $\phi$ in $\partial C$ are in $\Bump(C,B)$.  Then $\phi \in \Stab(B)$. 
 \end{lemma}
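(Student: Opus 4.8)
The plan is to argue by contradiction: assume $\phi \notin \Stab(B)$ and manufacture a configuration of pairwise disjoint components that violates the uniform-domain property of the quasi-disc $B$ from Definition \ref{d:uniform}. Since our standing hypotheses rule out accidental parabolics, $\phi$ is either parabolic or loxodromic as an element of $G$, and by the type-preserving correspondence its fixed point set lies on $\partial C$; I would treat these two cases separately. In both cases the hypothesis that the fixed points of $\phi$ lie in $\Bump(C,B) = \bar C \cap \bar B$ means every fixed point of $\phi$ lies on $\partial B$ as well, so that $B$ is pinned to those fixed points. The engine of the argument is that each $\phi^n(B)$ is again a component of $\Omega(G)$, that all of them carry the \emph{same} quasi-conformal constant $K_B$ because $\phi^n$ acts conformally on $S^2_\infty$, and that if $\phi \notin \Stab(B)$ then infinitely many of the $\phi^n(B)$ have disjoint interiors while all sharing the fixed points of $\phi$ on their boundaries.

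For the parabolic case I would reuse the cusp argument of Step (3) in the proof of Theorem \ref{thm:boundary} essentially verbatim. Normalizing $\phi\colon z \mapsto z+1$ so that its fixed point is $\infty \in \partial B$, if $\phi \notin \Stab(B)$ then the translates $\phi^n(B)$ have disjoint interiors, so displacing any point of $B$ by $\pm 1$ leaves $B$; since $\partial B$ passes through $\infty$ the domain $B$ is unbounded, and choosing $z_1, z_2 \in B$ with $|z_1 - z_2|$ arbitrarily large, the midpoint of any connecting arc violates condition (2) of Definition \ref{d:uniform}. This contradiction forces $\phi \in \Stab(B)$.

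For the loxodromic case, which I expect to be the main obstacle, I would normalize the two fixed points to $0$ and $\infty$, so that $\phi\colon z \mapsto \lambda z$ with $|\lambda|\neq 1$ and $0,\infty \in \partial B$. The components $\phi^n(B) = \lambda^n B$ are then \emph{scaled copies} of the single quasi-disc $B$, pairwise disjoint whenever $\phi \notin \Stab(B)$, each carrying both $0$ and $\infty$ on its boundary; moreover $B$ is connected and meets every circle $|z| = r$, since $|{\cdot}|$ maps the connected set $B$ onto all of $(0,\infty)$. The first step I would carry out is a \emph{thinness estimate}: because $\lambda^{-1} z \in \phi^{-1}(B)$ lies outside $B$ for every $z \in B$, the straight segment from $z$ to $\lambda^{-1} z$ must cross $\partial B$, whence $d(z, \partial B) \le |1 - \lambda^{-1}|\,|z|$, so that $B$ is radially thin, of width comparable to $|z|$ at scale $|z|$. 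The plan is then to play this scale-invariant thinness against Definition \ref{d:uniform} across many scales simultaneously, tracking a connecting arc of controlled length between a point near $0$ and a point near $\infty$ as it traverses the radii $|z| = |\lambda|^n$, so that the disjoint copies $\lambda^n B$ cannot all be accommodated.

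The genuinely delicate point, where I expect to spend the most effort, is that a single application of the uniform-domain inequalities yields only a relation among the constants $a$, $b$, and $K_B$ rather than an outright contradiction, so the self-similarity of the copies $\lambda^n B$ must be exploited essentially rather than incidentally. Here I would bring in two further constraints. First, $C$ itself is $\phi$-invariant with $0,\infty \in \partial C$ and is disjoint from every $\phi^n(B)$, so the $\phi$-equivariant packing of quasi-discs about the axis $(0,\infty)$ is further constrained; and the standing requirement that the closure of each component be a genuine closed disc rules out the logarithmic-spiral boundary behaviour that a loxodromic-invariant quasi-disc through $0$ and $\infty$ would otherwise exhibit. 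Second, Maskit's Lemma and its extension by Anderson \cite{Anderson} identify $\Bump(C,B)$ with $\Lambda(\Stab(C)\cap\Stab(B))$, which lets me phrase the remaining question as whether a loxodromic $\phi\in\Stab(C)$ can cyclically permute a nontrivial family of disjoint disc-closures all pinned at its two fixed points. Showing this is impossible is exactly what should yield $\phi \in \Stab(B)$ and complete the loxodromic case.
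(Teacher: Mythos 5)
Your parabolic case is exactly the paper's (it defers to step (3) of the proof of Theorem \ref{thm:boundary}), but the loxodromic case, which you correctly identify as the crux, is not closed, and there are two concrete problems. First, the reduction ``$\phi \notin \Stab(B)$ implies the $\phi^n(B)$ are pairwise disjoint'' is false: $\phi$ could permute a \emph{finite} orbit of components, e.g.\ $\phi(B)\neq B$ but $\phi^2(B)=B$. In that case there is no infinite family to play against the uniform-domain constants, and no thinness or accumulation argument applies. This finite-period case is precisely the ``remaining question'' you pose in your last sentence and never answer, so the proof is incomplete exactly where the real content lies. The paper disposes of it by a separate topological argument: since $\phi$ preserves the disc $S^2_\infty\setminus \bar C$, it is conjugate there to $z\mapsto \lambda z$ with $\lambda$ real, hence preserves each ray from the origin; two disjoint connected sets $B$ and $\phi(B)$, both invariant under $\phi^n$ and both accumulating on the two fixed boundary points, would then have to link along some ray, contradicting disjointness.

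Second, even in the genuinely infinite-orbit case your ``thinness estimate'' $d(z,\partial B)\le |1-\lambda^{-1}|\,|z|$ only bounds the ratio $d(z,\partial B)/|z|$ by a fixed constant, which is perfectly consistent with condition (2) of Definition \ref{d:uniform}; as you concede, it yields a relation among $a$, $b$, $K_B$ rather than a contradiction, and the proposed multi-scale bookkeeping is not carried out. The effective argument (the paper's) is simpler and does not need self-similarity at all: infinitely many pairwise disjoint components $\phi^n(B)$, each connected with $0$ and $\infty$ in its closure, all meet the compact circle $|z|=1$, so for large $n$ every point of $\phi^n(B)$ on that circle is arbitrarily close to $\partial\phi^n(B)$; meanwhile any arc in $\phi^n(B)$ from a point near $0$ to a point on $|z|=2$ must cross $|z|=1$ with both subarcs of length bounded below by a fixed constant. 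Since each $\phi^n(B)$ is a $K_B$-quasidisc (the $\phi^n$ act conformally) and hence a uniform domain with the \emph{same} constants $a,b$, this violates condition (2) for $n$ large. You have the right ingredients --- conformality of the translates, uniformity of quasi-discs, the pinning at the fixed points --- but you would need to replace the thinness estimate by this accumulation argument and add the linking argument for finite orbits to actually prove the lemma.
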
 
\begin{proof} 
The transformation $\phi$ is either parabolic or hyperbolic, by which we mean either strictly hyperbolic or loxodromic.  The parabolic case is contained in (3) of the proof of Theorem \ref{thm:boundary}.  Suppose that $\phi$ is hyperbolic. We may assume that $\phi$ fixes $0$ and $\infty$. Then the boundary of $B$, and the boundary of $\phi^n(B)$ for all $n$, meet $0$ and $\infty$.  If the $\phi^n(B)$ are all distinct, then they accumulate along the circle $|z|=1$.  Thus there are pairs of points in the $\phi_n(B)$, with one element of the pair close to 0 and the other on the circle $|z|=2$ which cannot be connected by any arc satisfying (2) of Definition \ref{d:uniform}. This is because any arc connecting such a pair would have to pass through the circle $|z|=1$, and these points are arbitrarily close to the boundary. Thus $\phi^n(B) =B$ for some $n$. 

Now suppose that $\phi(C) =C$. This does not preclude $\phi$ from being loxodromic.  However, $\phi$ leaves $S^2_\infty \setminus \bar C$ invariant, and this domain is conformally equivalent to the hyperbolic plane. Then $\phi$ is conjugate to $z \rightarrow \lambda z$, where $\lambda$ is real. By abuse of notation, we continue to denote the transformation $z \rightarrow \lambda z$ by $\phi$ and we denote the image of $B$ by $B$.  Note that $\phi$ leaves rays from the origin invariant.  Since $\phi^n(B) = B$, the arcs of $B \cup r$ will be linked with the arcs of $\phi(B) \cup r$ along some ray $r$ from the origin.  This is a contradiction as $B$ and $\phi(B)$ are disjoint and connected. 
\end{proof}

We now state the annulus theorem in this setting.  See \cite{CF} for the general case.   The proper immersed image $\mathcal{A}$ of an annulus or M\"obius band in a hyperbolic manifold $M$ with boundary is \emph{essential} if it induces an injection on the level of fundamental groups, and if it is not properly homotopic into a cusp neighborhood. 

\begin{theorem} \cite{CF} \label{thm:annulus} Let $G$ be a Kleinian group with incompressible boundary and let $p: {\bf H}^3 \cup \Omega(G) \rightarrow M(G)$ be the covering map. Let $A$ be an proper immersed essential annulus or M\"obius band in $M(G)$ with embedded boundary.  Then there is a proper embedded essential annulus or M\"obius band $\mathcal{A}$ with the same boundary and a pre-image $\mathcal{\tilde A}$ in ${\bf H}^3 \cup \Omega(G)$ with boundary in two different components of $\Omega(G)$. 

\end{theorem}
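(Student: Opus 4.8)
The plan is to separate the statement into its two assertions and handle them in turn. The first is that an essential immersed annulus or M\"obius band with embedded boundary can be replaced by an embedded one with the same boundary; the second is that such an embedded surface admits a lift to ${\bf H}^3 \cup \Omega(G)$ whose two boundary lines lie in distinct components of $\Omega(G)$. For the first assertion I would simply invoke the annulus theorem of Cannon and Feustel \cite{CF}, which produces an embedded essential annulus or M\"obius band from an immersed one; this is the deep external input. Since the boundary of $A$ is already embedded and $\partial M(G)$ is incompressible, homotopic simple closed curves on $\partial M(G)$ are isotopic, so after an isotopy I may arrange that the embedded representative $\mathcal{A}$ has exactly the same boundary as $A$.

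The content specific to this setting is the second assertion, which I would prove by a covering-space argument. Let $g \in G$ be the image of the core of $\mathcal{A}$ under $\pi_1(\mathcal{A}) \hookrightarrow \pi_1(M(G)) = G$. Essentiality guarantees that $g$ is nontrivial, and since $\mathcal{A}$ is not properly homotopic into a cusp, $g$ is not parabolic; thus its fixed-point set on $S^2_\infty$ consists of the two endpoints $p^{\pm}$ of an axis. A connected lift $\tilde{\mathcal{A}}$ of $\mathcal{A}$ is the universal cover of the annulus, hence an embedded strip $\mathbb{R} \times [0,1]$ invariant under $g$, and its two boundary lines $\ell_0, \ell_1$ lie in $\partial({\bf H}^3 \cup \Omega(G)) = \Omega(G)$. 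Each $\ell_i$ is connected, so it lies in a single component $C_i$ of $\Omega(G)$, and following $\ell_i$ to its two ends shows $p^{\pm} \in \bar C_i$. In particular $g \in \Stab(C_0) \cap \Stab(C_1)$, and since all other lifts are $G$-translates of $\tilde{\mathcal{A}}$, whether $C_0 = C_1$ is a property of $\mathcal{A}$ alone.

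It therefore suffices to rule out $C_0 = C_1$. If $\ell_0$ and $\ell_1$ both lie in a single component $C$, then, $\bar C$ being a closed disc and $\ell_0, \ell_1$ being disjoint embedded arcs sharing the two endpoints $p^{\pm} \in \partial \bar C$, they cobound a bigon region $R \subset \bar C$. Gluing $\tilde{\mathcal{A}}$ to $R$ along $\ell_0 \cup \ell_1$ yields a sphere bounding a ball $W$ in ${\bf H}^3 \cup \Omega(G)$, and $\ell_0, \ell_1, R, W$ are all invariant under $g$. The $\langle g\rangle$-quotient of $W$ then realizes a proper isotopy of $\mathcal{A}$ into the surface $p(C) \subset \partial M(G)$, so $\mathcal{A}$ is boundary-parallel, contradicting its essentiality. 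Hence $C_0 \neq C_1$, which is precisely the required conclusion. In the M\"obius band case one runs the same argument on the orientation double cover, where $g$ acts along the strip as a glide reflection.

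The main obstacle is less the components argument than importing the embedding step cleanly: matching boundaries exactly and treating the non-orientable case without disturbing the cusp hypothesis both require care. Within the covering argument itself, the delicate point is verifying that the slab $W$ is embedded and that no $G$-translate of $\tilde{\mathcal{A}}$ obstructs the descent, so that one obtains a genuine boundary-parallelism rather than merely an equivariant homotopy; this is where the hypothesis that every component is a quasi-disc with closure a closed disc (and hence the absence of accidental parabolics) is used to control the behavior of $R$ and $W$ near the ideal points $p^{\pm}$.
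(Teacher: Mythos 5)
Your proposal matches the paper's treatment of this statement: the paper gives no proof beyond citing Cannon--Feustel for the embedded representative with the same boundary and remarking in a single sentence that the final clause ``follows immediately from the fact that $\mathcal{A}$ is essential.'' Your bigon/boundary-parallelism argument is a correct expansion of that remark --- just note that it implicitly uses the standard convention that an essential annulus is not boundary-parallel, which the paper's stated definition of \emph{essential} omits but clearly intends.
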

That any pre-image has boundary in two different components of $\Omega(G)$ follows immediately from the fact that $\mathcal{A}$ is essential.  We add this to the statement only because it is important for our point of view. 

We now give the proof of Theorem \ref{characteristic}.  Given a Kleinian group $G$ with incompressible boundary, we will form a submanifold $N$ of $M(G)$ which is a characteristic submanifold.  We form this submanifold in pieces, considering maximal collections of components of $\Omega(G)$ which meet in a given bumping set.  Note that if there are more than two components in such a collection, the bumping set must be exactly two points, as every circle on $S^2_\infty$ is separating.  (We ignore collections that bump in exactly one point, since the convex hull of the bumping set will be trivial in this case.) In constructing these pieces, we will show that they satisfy properties (1) and (2) of the definition of characteristic submanifold above.  Then we will show (3) that any essential annulus or M\"obius band is properly homotopic into one of these components. That the result is unique up to isotopy follows from the fact that it is a characteristic submanifold. 

Note that we are considering the disjoint union of the components in the statement of Theorem \ref{characteristic}.  To have the union in $M(G)$ consist of disjoint components, the components may need to be pushed slightly off of each other.

{\bf Components of the characteristic submanifold obtained by the bumping of two components:} We first consider two components $C$ and $D$ which bump, and which bump in exactly two points $p$ and $q$. We assume maximality in that there are no other components of the domain of discontinuity which meet both $p$ and $q$. In this case $\Niel_C(C,D)$ consists of one arc in $C$, $\tilde l$, which is invariant under some element $g \in G$ by Thm \ref{thm:boundary}.  The element $g$ fixes both $p$ and $q$ on $S^2_\infty$.  We choose $g$ so that it is primitive in the stabilizer of $C$.  By Lemma \ref{lem:stab} $D$ is also stabilized by $g$, $g$ is primitive in $\Stab(D)$, and the arc $\tilde l' = \Niel_D(C,D)$ is also invariant under $g$.  Then $l = p(\tilde l)$ and $l' = p(\tilde l') $ are freely homotopic through the manifold $M(G)$ to the closed geodesic $l_{int}$ in $M(G)$  which lifts to a geodesic $\tilde l_{int}$ in $\mathbb{H}^3$ with endpoints $p$ and $q$ which is invariant under $g$.  If $l =l'$, then this homotopy will define an immersed M\"obius strip.  (In this case there is an $f \in G$ such that $f^2=g$.) If $l \neq l'$, this homotopy defines an immersed annulus.  In either case, by the annulus theorem, there is an embedded essential M\"obius strip or annulus $A$ with the same boundary.  When $A$ is an annulus, taking a regular neighborhood of this annulus gives us  an  $(A \times I, S^1 \times S^0 \times I)$ as a component $(X,S)$ of the characteristic submanifold  $(X_M,S_M)$. When $A$ is a M\"obius strip, we get a component $(X,S)$ which is a twisted $I$-bundle over an annulus. We can also realize this case as $(T, S)$, where $T$ is a solid torus with a natural Seifert fibered structure and $S$ is an annulus.  There is a such a solid torus component of the characteristic submanifold of the 3-manifold illustrated in Figure \ref{adjoin}.   

We now consider two components $C$ and $D$ which bump, and whose bumping set contains more than two, and hence infinitely many, points.   When $\Bump(C,D)$ contains more than 2 points, $\Niel_C(C,D)$ contains more than just a single geodesic.  If $\Bump(C,D) = \partial C = \partial D$, then the characteristic submanifold is the entire manifold $M(G)$ which is a $I$-bundle over a surface.  Otherwise the image $p(\Niel_C(C,D))$ of the convex hull of the bumping set is a subsurface of $p(C)$ bounded by geodesics, by Theorem \ref{thm:boundary}.  We claim that for each boundary curve of $l$ of $p(\Niel_C(C,D))$, there is a boundary curve $l'$ of $p(\Niel_D(C,D))$ and an essential annulus $A$ with $\partial A = l \cup l'$. Indeed, there is a lift $\tilde l$ of $l$ that is a boundary curve of $\Niel_C(C,D)$.   By Theorem \ref{thm:boundary} and Lemma \ref{lem:stab}, $\tilde l$  is stabilized by some element $g$ of $G$ which stabilizes $C$. Let $p$ and $q$ be the fixed points of $g$ on $S^2_\infty$.  By Lemma \ref{lem:stab}, the boundary geodesic $\tilde l'$ of $\Niel_D(C,D)$ in $D$ with endpoints $p$ and $q$ is also stabilized by $g$.  Let $p(\tilde l') = l'$.  Then $l $ and $l'$ are freely homotopic, since they are both homotopic to the geodesic representing $g$.  If $l \neq l'$, then by the Annulus Theorem, there is an embedded annulus $A$ with boundary $l$ and $l'$.   If $l = l'$, then there is an $f \in G$ such that $f(\tilde l) = \tilde l'$, where $f$ has the same fixed points as $g$.  Since $l$ is a boundary curve of $\Niel_C(C,D)$, $f(D) \neq C$, so $p$ and $q$ are contained in a bumping set involving at least $C$, $D$, and $f(D)$.  In this case, we replace the boundary arcs $\tilde l$ and $\tilde l'$ with $g$-equivariant arcs also called $\tilde l$ and $\tilde l'$  that lie just in the interior of $\Niel_C(C,D)$  and $\Niel_D(C,D)$. We replace $\Niel_C(C,D)$ and $\Niel_D(C,D)$ with the new, shrunken regions.  Then $l \neq l'$ and we can form our embedded annulus with these new curves. We do this for each boundary curve of $p(\Niel_C(C,D))$. Note that some boundary curves will correspond to the same annulus if $p(\Niel_C(C,D)) = p(\Niel_D(C,D))$.  Consider the resulting union of annuli. We claim that we may assume the union is embedded. Firstly, the boundaries of the family of annuli do not intersect by construction. Secondly, we can remove any inessential circles of intersection by an innermost disk argument as $M(G)$ is irreducible.   Thirdly, there can be no essential intersections.  Indeed,  any such  curve of intersection must lift to an arc in $\mathbb{H}^3$ which meets the limit set of $G$ in the same two points as the boundary components of two different annuli.  But the lifts of the boundaries of the allegedly essentially  intersecting  annuli meet the limit set in different points of $\Bump(C,D)$.  This is because they correspond to different pairs of boundary curves of $p(\Niel_C(C,D))$ and $ p(\Niel_D(C,D))$.  Therefore, we may assume that the collection of annuli connecting the boundary components of $p(\Niel_C(C,D))$ and $p(\Niel_D(C,D))$ is embedded.  This family of annuli lifts to an embedded family of strips $\mathcal{R}^2 \times I$ in $\mathbb{H}^3 \cup \Omega(G)$.  There will be some region $R$ bounded by these strips which meets $\Niel_C(C,D)$.  The image $p(R)$ is a component $(X,S)$ of $(X_M, S_M)$.  It is an $I$-bundle over a surface.  When $p(\Niel_C(C,D)) = p(\Niel_D(C,D))$, this will be a twisted $I$-bundle. The components of $\partial X \setminus \partial M$ are the annuli constructed above.

{\bf Components of the characteristic submanifold obtained by the bumping of more than two components:} We now consider the case when there are more than two components of the domain of discontinuity which bump non-trivially.  In this case the closures of the components must meet in exactly two points.  Again we assume that adding any components of $\Omega(G)$ to the collection results in a smaller bumping set. Let $C_1,...C_n$ be this maximal collection whose closures meet in two points $p$ and $q$.  By Theorem \ref{thm:boundary} and Lemma \ref{lem:stab}, there is a $g \in G$ such that for each $C_i$, there is a geodesic $\tilde l_i$ which is stabilized by $g$.  Denote the geodesic in $\mathbb{H}^3$ stabilized by $g$ with endpoints $p$ and $q$ by $\tilde l_{int}$, with image $l_{int}$. Either (1) all the $p(\tilde l_i) = l_i$ are the same, (2) all the $p(\tilde l_i) = l_i$ are different, or (3) the images fall into $m$ classes, where $m | n$.  We deal with each situation in turn. 

(1) If all the $l_i = l_1$ are the same, then consider a regular neighborhood $N(l_1)$ of $l_1$.  This lifts to regular neighborhoods of each of the $\tilde l_i$.  The boundary curves of these regular neighborhoods end in $p$ and $q$.  Orient $N(l_1)$ so that there is a left side $\partial N(l_1) _-$ and a right side $\partial N(l_1)_+$.  Since each of $\partial N(l_1) _+$ and $\partial N(l_1) _-$ is freely homotopic to $l_{int}$, the lifts of $\partial N(l_1) _+$ and $\partial N(l_1) _-$  in each $C_i$ bound strips $\mathbb{R} \times [0,1]$ with $\tilde l_{int}$. Then consider two strips in $\mathbb{H}^3 \cup \Omega(G)$, one which is bounded by a lift of $\partial N(l_1)_+$ and $\tilde l_{int}$, and the other which is bounded by a lift of $\partial N(l_1)_-$ and $\tilde l_{int}$, where the lifts of $\partial N(l_1)_+$ and $\partial N(l_1)_-$ are in different components of $\Omega(G)$.  Then the union of these two strips will map down to an essential annulus in $M(G)$, and by the annulus theorem there is an embedded essential annulus $\mathcal{A}$ with the same boundary, which is $\partial N(l_1)_+ \cup \partial N(l_1)_-$. The pre-image of $\mathcal{A}$ is an embedded collection of strips each of which meets $\Omega(G)$ in two different components.  Order the components $C_i$ cyclically around $\tilde l_{int}$.   Since $\mathcal{A}$ is embedded, the strips must connect $C_i$ to $C_{i+1} \mod n$.  Then the pre-image of $\mathcal{A}$ will partition $\mathbb{H}^3 \cup \Omega(G)$ into regions, one of which, $R$, will meet $\tilde l_i$.  The region $R$ is a regular neighborhood of $\tilde l_{int}$ union thickened strips which meet the lifts of $N(l_1)$.  It is a naturally fibered by $g$-invariant lines. (Note that $\tilde l_{int}$ is $f$-invariant, where $f^n =g$.)  The image of $R$ is a component of the characteristic submanifold, $(X, S) = (p(R), N(l_1))$.  It is Seifert-fibered by the images of the $g$-invariant lines and $\partial X \setminus \partial M(G)$ is the annulus $\mathcal{A}$.

(2) We consider the case when the $l_i$ are all distinct. Denote regular neighborhoods of these curves by $N(l_i)$ and lifts  which meet $p$ and $q$ by $N(\tilde l_i)$. As above, order the $C_i$ around $\widetilde l_{int}$ and label the two boundary components of $N(\tilde l_i)$ by $\partial N(\tilde l_i)_+$ and $\partial N(\tilde l_i)_-$ so that $\partial N(\tilde l_i)_+$ is next to  $\partial N(\tilde l_{i+1})_-$ in this cyclic ordering.  Note that the $C_i$ may be swirling around $p$ and $q$ as they approach them (if $g$ is loxodromic) but we can choose some circle on $S^2_\infty$ separating $p$ and $q$ and cyclically order the components with respect to this circle, and this is well-defined up to the orientation of $S^2_\infty$.  Then as above there are $g$-invariant strips in $\mathbb{H}^3 \cup \Omega(G)$ connecting each component of $\partial N(\tilde l_i)$ to $\tilde l_{int}$. The union of two such $g$-invariant strips, one from $\partial N(\tilde l_i)_+$ to $\tilde l_{int}$ and one from $\tilde l_{int}$ to $\partial N(\tilde l_{i+1})_-$ map down to an immersed essential annulus in $M(G)$.  By the annulus theorem, there is an embedded essential annulus with the same boundary, $\mathcal{A}_i$.  This has a lift $\widetilde{\mathcal{A}_i}$ which meets $p$ and $q$.  Since all the $l_i$ are distinct, this $\widetilde{\mathcal{A}_i}$ has boundary $\partial N(\tilde l_i)_+$ and $\partial N(\tilde l_{i+1})_-$.  We form such an embedded annulus $\mathcal{A}_i$ for each $i \mod n$, with a lift $\widetilde{\mathcal A}_i$ with boundary  $\partial N(\tilde l_i)_+$ and $\partial N(\tilde l_{i+1})_-$  which approaches $p$ and $q$.  We claim that we can choose such annuli so that the union is embedded.  Firstly, the union of the boundaries is already embedded.  Secondly, remove any circles of intersection which are trivial in some (hence any) annulus by incompressibility and irreducibility.  Now consider any remaining circles of intersection between the $\mathcal{A}_i$ and $\mathcal{A}_1$ in $\mathcal{A}_1$.  These are parallel, essential curves on $\mathcal{A}_1$.  Hence in the lift $\widetilde{\mathcal{A}_1}$, the lifts of these intersections all approach $p$ and $q$.  This means that the lift $\widetilde{\mathcal{A}_1}$ only intersects the $\widetilde{\mathcal{A}_i}$ which approach $p$ and $q$.  As these are not linked in the cyclic ordering around $p$ and $q$, there is some pair of intersection curves which bounds annuli on both $\mathcal{A}_1$ and some $\mathcal{A}_i$.  Switching these two inner annuli and pushing off will reduce the number of intersection curves.   Hence by choosing the collection $\mathcal{A}_i$ to minimize the number of intersection curves, the collection will be embedded.  All the pre-images of the embedded collection $\mathcal{A}_i$ will partition $\mathbb{H}^3 \cup \Omega(G)$ into regions which do not overlap in their interiors.  One of these regions, $R$, will meet the $N(\tilde l_i)$.  This region is naturally foliated by $g$-invariant lines.  The image $(p(R), \bigcup N(l_i))$ is a component of the characteristic submanifold which is Seifert-fibered by the images of the $g$-invariant lines.  $p(R)$ is a solid torus and $\partial(p(R)) \setminus \bigcup N(l_i)$ is the union of the annuli $\mathcal{A}_i$. 

(3) Lastly we consider the case when the images of the $\tilde l_i$ are $m$ distinct curves, where $m | n $ and $ m \neq 1,n$.  Our first task is to show that in this case $l_{int}$, as defined above, is embedded.  As before, each $\tilde l_i$ is invariant under $g \in G$, where $g$ is hyperbolic and fixes $p$ and $q$ on $S^2_\infty$. Denote the geodesic in $\mathbb{H}^3$ invariant under $g$ by $\tilde l_{int}$.  Then, as there are $w = n/m $ curves  in $\tilde l_i$ which are identified in the quotient, $\tilde l_{int}$ is invariant under $f$ where $f^w =g$.  Now let $\tilde l_1, \tilde l_2, ... \tilde l_w$ be the $f$-orbit of $\tilde l_1$, cyclically ordered around $p$ and $q$ as above.  Let $l_1 = p(\tilde l_1)$ and let $N(l_1)$ be a regular neighborhood of this image.  Then label the two boundary components of $N(l_1)$ by $\partial N(l_1)_+$ and $\partial N(l_1)_-$ so that the induced labeling of the boundary components of the lifts has   $\partial {N(\tilde l_i)_+}$ next to $\partial {N(\tilde l_{i+1})_-}$ $\mod w$ in the cyclic ordering around $p$ and $q$.  Then there is a $g$-invariant strip connecting $\partial {N(\tilde l_1)_+}$ and $\tilde l_{int}$ and another connecting $\tilde l_{int}$ with $\partial {N(\tilde l_2)_-}$.  The union of these two invariant strips maps down to an immersed essential annulus in $M(G)$ and by the annulus theorem, there is an embedded essential annulus $\mathcal{A}_{temp}$ with the same boundary.  The lifts of $\mathcal{A}_{temp}$ do not intersect and hence the lifts meeting $p$ and $q$ consist of $w$ strips connecting each $\partial {N(\tilde l_i)_+}$ to $\partial {N(\tilde l_{i+1})_-}$.  The action of $f$ permutes these $w$ strips cyclically and takes $\tilde l_{int}$ to itself. Therefore, $\tilde l_{int}$ is on the inside of these strips.  That is, there is a region $R$ bounded by preimages of $\mathcal{A}_{temp}$ which meets the ${N(\tilde l_i)}$ and $R$ contains $\tilde l_{int}$.  Thus $\tilde l_{int}$ intersects its images under $G$ in either itself or the empty set, which implies $p(\tilde l_{int}) = l_{int}$ is embedded.  Now consider the whole set of $\tilde l_i$. Order them cyclically around $p$ and $q$.  Each $\tilde l_i$ is connected to $\tilde l_{int}$ by a $g$-invariant strip. Since $l_{int}$ is embedded, it has a regular neighborhood $N(l_{int}$ which is a solid torus.  The image of such a $g$-invariant strip in $M(G)$ will restrict to an essential proper immersed annulus in $M(G) \setminus N(l_{int})$.  By the annulus theorem, there is an embedded annulus $\mathcal{A}_{i, int}$ in $M(G) \setminus N(l_{int})$ with the same boundary. We form such an annulus for each $l_i$.   Note that each boundary on $\partial N(l_{int})$ is a curve of the same slope.  Therefore, we can arrange so that the boundaries of the $\mathcal{A}_{i,int}$ are disjoint, and cyclically ordered in the same order as the $\tilde l_i$.  Then since the boundary of the annuli are not linked, by choosing a collection that intersect minimally, the $\mathcal{A}_{i,int}$ will be disjoint.  Now take a regular neighborhood of $N(l_{int}) \cup_i \mathcal{A}_{i,int}$.  This will be a solid torus $W$ which meets the boundary of $M(G)$ in $n$ parallel annuli $N(l_i)$, where $N(l_i)$ is a regular neighborhood of $l_i$.  The component of the characteristic submanifold will be $(W, \cup N(l_i) )$.  This has a pre-image $R$ in $\mathbb{H}^3 \cup \Omega(G)$ which meets the $ N(\tilde l_i)$ and which is naturally foliated by $g$-invariant lines.  The images of these lines in $W$ are a Seifert-fibering of the solid torus $W$.  The components of $\partial W \setminus \cup N(l_i)$ are annuli on $\partial W$. The pre-images in $R$, as before, connect neighboring boundary components of the $ N(\tilde l_i)$.  

Now let $\mathcal{B}$ be an essential annulus or M\"obius strip in $M(G)$.  We will show that $\mathcal{B}$ is properly homotopic into the submanifold constructed above.  Pick a basepoint on $\mathcal{B}$ and let $g$ generate the fundamental group of $\mathcal{B}$ in $G$.  Since $\mathcal{B}$ is essential, a lift $\widetilde{\mathcal{B}}$ of $\mathcal{B}$ must meet two different components, $C$ and $D$, of $\Omega(G)$, both of which are $g$-invariant and which meet the fixed points $p$ and $q$ of $g$ on $S^2_\infty$. Thus $C$ and $D$ bump at $p$ and $q$.  From our construction, there is a $g$-invariant strip $\widetilde{A}$ contained in some component $(X,S)$ which meets $C$ and $D$ in the convex hull of $p$ and $q$ in each component.  Consider the solid torus $T = (\mathbb{H}^3 \cup S^2_{\infty} \setminus \lbrace p,q \rbrace)/<g>$.  This is a solid torus and the images of $\widetilde{B}$ and $\widetilde{A}$ are two embedded essential annuli with the same slope. They are therefore parallel by a proper isotopy. This isotopy maps down to a proper homotopy of $\mathcal{B}$ into a component $(X,S)$ of the submanifold we have constructed.

\qed

\subsection{Acknowledgements} 

I would like to thank Curt McMullen and Ian Biringer for encouragement.  I am also grateful to the NSF for partial support via grant 0805908. 

\end{document}